\documentclass[11pt,reqno]{amsart}
\usepackage{amsmath,amssymb,extarrows}
\usepackage{url}
\usepackage{tikz,enumerate}
\usepackage{diagbox}
\usepackage{appendix}
\usepackage{epic}
\vfuzz2pt 

\usepackage{cite}

\usepackage{hyperref}
\usepackage{array}

\usepackage{booktabs}

\setlength{\oddsidemargin}{0.2in}
\setlength{\evensidemargin}{0.2in}
\setlength{\textwidth}{5.8in}
\allowdisplaybreaks[4]

\newtheorem{theorem}{Theorem}
\newtheorem{corollary}[theorem]{Corollary}
\newtheorem{conj}[theorem]{Conjecture}

\theoremstyle{definition}

\theoremstyle{remark}
\newtheorem{rem}{Remark}
\numberwithin{equation}{section}
\numberwithin{defn}{section}


\begin{document}
\title[Truncated series with Nonnegative coefficients]
 {Truncated Series with Nonnegative Coefficients from the Jacobi Triple Product}

\author{Liuquan Wang}
\address{School of Mathematics and Statistics, Wuhan University, Wuhan 430072, Hubei, People's Republic of China}
\email{wanglq@whu.edu.cn;mathlqwang@163.com}

\dedicatory{Dedicated to the memory of Srinivasa Ramanujan}

\subjclass[2010]{Primary 11P81, secondary 05A17}

\keywords{Partitions; truncated series; Jacobi triple product; Euler's pentagonal number theorem; Nonnegative coefficients.}

\begin{abstract}
Andrews and  Merca investigated a truncated version of Euler's pentagonal number theorem and showed that the coefficients of the truncated series are nonnegative.  They also considered the truncated series arising from Jacobi's triple product identity, and they that its coefficients are nonnegative. This conjecture was posed by Guo and Zeng independently and confirmed by  Mao and  Yee using different approaches. In this paper, we provide a new combinatorial proof of their nonnegativity result related to Euler's pentagonal number theorem. Meanwhile, we find an analogous result for a truncated series arising from Jacobi's triple product identity in a different manner.
\end{abstract}

\maketitle
\section{Introduction}	
A partition of a positive integer $n$ is any non-increasing sequence of positive integers which add up to $n$. For example, $\lambda=(5,3,1,1)$ is a partition of $10$. We denote the number of partitions of $n$ by $p(n)$ and agree that $p(0)=1$ for convention. The generating function of $p(n)$ is
\begin{equation}\label{p(n)gen}
\sum\limits_{n=0}^{\infty}p(n)q^n=\frac{1}{(q;q)_{\infty}}.
\end{equation}
Here and for the rest of this paper, we assume $|q|<1$ and use the notation
\begin{align}
(x_1,x_2,\cdots, x_k;q)_{m}=\prod\limits_{n=0}^{m-1}(1-x_1q^n)(1-x_2q^n)\cdots (1-x_kq^n),\quad m\in \mathbb{N}\cup \{\infty\}.
\end{align}

One of the most famous $q$-series identities is Euler's pentagonal number theorem:
\begin{equation}\label{Euler}
(q;q)_{\infty}=\sum\limits_{j=0}^{\infty}(-1)^jq^{j(3j-1)/2}(1-q^{2j+1}).
\end{equation}
One can easily derive the following recurrence relation for $p(n)$ from \eqref{p(n)gen} and \eqref{Euler} \cite[Corollary 1.3.6]{Berndt}
\begin{equation}\label{Euler-rec}
\sum\limits_{j=0}^{\infty}(-1)^j\Big(p(n-j(3j+1)/2)-p(n-j(3j+5)/2-1) \Big) =0,
\end{equation}
where we assume $p(m)=0$ for $m<0$.

In 2012, Andrews and Merca \cite{Andrews-Merca2012} studied the truncated series arising from \eqref{Euler}. They established the following interesting identity:
\begin{align}\label{AM-id}
\frac{1}{(q;q)_\infty}\sum_{j=0}^{k-1}(-1)^jq^{j(3j+1)/2}(1-q^{2j+1})=1+(-1)^{k-1}\sum_{n=k}^\infty \frac{q^{\binom{k}{2}+(k+1)n}}{(q;q)_k}\left[\begin{matrix}
n-1 \\ k-1
\end{matrix}\right].
\end{align}
Here the $q$-binomial coefficient is defined as
\begin{align}
\left[\begin{matrix}
n \\k
\end{matrix}\right]=\left[\begin{matrix}
n \\k
\end{matrix}\right]_q:=\left\{\begin{array}{ll}
\frac{(q;q)_n}{(q;q)_k(q;q)_{n-k}}, &\text{if $0\leq k \leq n$}, \\
0, & \text{otherwise}.
\end{array}\right.
\end{align}
As a consequence, they proved that for $k\ge 1$,
\begin{align}\label{Andrews-Merca-id}
(-1)^{k-1}\sum\limits_{j=0}^{k-1}(-1)^j\Big(p(n-j(3j+1)/2)-p(n-j(3j+5)/2-1) \Big)=M_k(n),
\end{align}
where $M_k(n)$ is the number of partitions of $n$ in which $k$ is the least integer that is not a part and there are more parts $>k$ than there are $<k$. It follows that for $n>0$ and $k\geq 1$ (see \cite[Corollary 1.3]{Andrews-Merca2012}),
\begin{equation}\label{Euler-ineq}
(-1)^{k-1}\sum\limits_{j=0}^{k-1}(-1)^j\Big(p(n-j(3j+1)/2)-p(n-j(3j+5)/2-1) \Big) \ge 0.
\end{equation}
Andrews and Merca \cite[Question (1)]{Andrews-Merca2012} asked whether one could give a combinatorial proof of \eqref{Andrews-Merca-id} hopefully characterizing the partitions remaining after a sieving process.

As a generalization of \eqref{Euler}, Jacobi's triple product identity states that for $z\ne 0$,
\begin{equation}\label{JTP}
\sum\limits_{j=-\infty}^{\infty}(-z)^jq^{j(j-1)/2}=(z;q)_{\infty}(q/z;q)_{\infty}(q;q)_{\infty}.
\end{equation}
By considering truncated series arising from Jacobi's triple product identity, Andrews and Merca \cite[Question (2)]{Andrews-Merca2012} and Guo and Zeng \cite[Conjecture 6.1]{Guo} posed the following conjecture:
\begin{conj}\label{AMconj}
For positive integers $k,R,S$ with $k\ge 1$ and $1\le S<R/2$, the coefficient of $q^n$ with $n\ge 1$ in
\begin{equation}\label{Andrewsconj}
(-1)^{k-1}\frac{1}{(q^S,q^{R-S},q^{R};q^{R})_{\infty}}\sum\limits_{j=0}^{k-1}(-1)^jq^{Rj(j+1)/2-Sj}(1-q^{(2j+1)S})
\end{equation}
is nonnegative.
\end{conj}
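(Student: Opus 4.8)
The plan is to follow the path by which Andrews and Merca passed from the exact identity \eqref{AM-id} to the inequality \eqref{Euler-ineq}: first prove an exact truncated Jacobi triple product identity that rewrites \eqref{Andrewsconj} as a constant plus a series visibly equal to a nonnegative combination of monomials, and then read off the claim. Put $J:=(q^S,q^{R-S},q^R;q^R)_\infty$. Specializing \eqref{JTP} by $q\mapsto q^R$ and $z=q^S$, and then replacing the summation index by its negative, gives the bilateral form $J=\sum_{m\in\mathbb{Z}}(-1)^m q^{Rm(m+1)/2-Sm}$. Expanding the summand of the truncated series, $(-1)^j q^{Rj(j+1)/2-Sj}(1-q^{(2j+1)S})$ splits as the bilateral term of index $m=j$ plus the bilateral term of index $m=-(j+1)$; hence the truncated series equals the partial bilateral sum $\sum_{m=-k}^{k-1}(-1)^m q^{Rm(m+1)/2-Sm}$. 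Merging the two one-sided tails via $m\leftrightarrow-m-1$, which fixes $Rm(m+1)/2$, I obtain
\begin{align*}
(-1)^{k-1}\,\frac{1}{J}\sum_{j=0}^{k-1}(-1)^j q^{Rj(j+1)/2-Sj}\bigl(1-q^{(2j+1)S}\bigr)=(-1)^{k-1}+\Phi_k(q),
\end{align*}
where
\begin{align*}
\Phi_k(q):=\frac{1}{J}\sum_{i\ge0}(-1)^i\Bigl(q^{R(k+i)(k+i+1)/2-S(k+i)}-q^{R(k+i)(k+i+1)/2+S(k+i+1)}\Bigr).
\end{align*}
The hypothesis $1\le S<R/2$ forces every exponent above to be a positive integer, so $\Phi_k(q)=q^{Rk(k+1)/2-Sk}+\cdots$ has vanishing constant term --- which is exactly why the conjecture is restricted to $n\ge1$. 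Consequently the coefficient of $q^n$ in \eqref{Andrewsconj} for $n\ge1$ is the coefficient of $q^n$ in $\Phi_k(q)$, and Conjecture~\ref{AMconj} is equivalent to the statement that $\Phi_k(q)$ has nonnegative coefficients.

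To prove the latter, the point is to absorb the alternating sign of the partial theta tail against the reciprocal theta product $1/J$. The target is an Andrews--Merca-type closed form $\Phi_k(q)=\sum_{n\ge k}c_{R,S}(k,n;q)$ in which each summand $c_{R,S}(k,n;q)$ is a product of a power of $q$, a $q$-binomial coefficient, a reciprocal of a finite $q$-product, and a residual factor, every one of these factors being individually a power series with nonnegative coefficients; for $R=3$, $S=1$ this should reproduce \eqref{AM-id}. The natural mechanism for producing such an identity is to feed the tail $\sum_{i\ge0}(-1)^i(\cdots)$ through a $q$-series transformation --- a Bailey-pair computation, the Rogers--Fine identity, or a suitable terminating $q$-hypergeometric summation --- which trades the sign $(-1)^i$ for a $q$-binomial coefficient and, under $1\le S<R/2$, leaves behind a residual factor whose expansion is then verified to be nonnegative. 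An alternative, purely combinatorial route reads $1/J$ as the generating function for partitions whose parts all lie in $\{\,m\ge1:\ m\equiv0,\pm S\pmod{R}\,\}$, interprets $J\cdot\Phi_k(q)$ as a signed enumeration of such partitions carrying one auxiliary marker, and cancels the negatively signed configurations by an explicit sign-reversing involution, in the spirit of Yee's bijective proof.

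The main obstacle is precisely this de-alternation. The partial theta tail $\sum_{i\ge0}(-1)^i(\cdots)$ genuinely alternates, and what has to be shown is that division by the \emph{full} theta product $J$ --- not merely by $(q^R;q^R)_\infty$ --- is exactly what cancels the sign. In the pentagonal case everything collapses to a single clean term, but for general $R$ and $S$ one should expect the residual factor to be a nontrivial $n$-dependent theta quotient, and establishing that it has nonnegative coefficients --- or, on the combinatorial side, constructing the required involution on the marked partitions and controlling its fixed points --- is where the work concentrates. This is presumably why the two confirmations of the conjecture, by Mao and by Yee, proceed by genuinely different methods.
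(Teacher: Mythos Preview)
Your reduction is correct: rewriting the truncated sum as the partial bilateral sum $\sum_{m=-k}^{k-1}(-1)^m q^{Rm(m+1)/2-Sm}$, subtracting it from the full bilateral identity for $J$, and folding the two tails via $m\leftrightarrow-m-1$ does yield the expression $(-1)^{k-1}+\Phi_k(q)$ you wrote, and the conjecture is indeed equivalent to $\Phi_k(q)\succeq 0$. But the proposal stops exactly there. You acknowledge that the ``de-alternation'' of $\Phi_k$ is the substance of the problem, and then only list possible mechanisms (Rogers--Fine, Bailey pairs, a Yee-type involution) without executing any of them. That is the gap: what you have is a correct and standard preliminary reduction followed by a survey of approaches, not a proof.

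The paper does not prove Conjecture~\ref{AMconj} itself; it quotes the Wang--Yee identity \eqref{Wang-Yee-id}, which is precisely the Andrews--Merca-type closed form you say you are aiming for, and it references Mao's and Yee's earlier proofs. The concrete move you are missing, however, is visible in the paper's own proof of the analogous Theorem~\ref{thm-main} (see \eqref{I1-mid}--\eqref{I2-mid}, taken from Mao): one simply pairs the $i=2l$ and $i=2l+1$ terms of each partial theta in $\Phi_k$. For the first sum this produces
\[
q^{R(k+2l)(k+2l+1)/2 - S(k+2l)}\bigl(1-q^{R(k+2l+1)-S}\bigr),
\]
and since $R(k+2l+1)-S\equiv R-S\pmod R$, the factor $1-q^{R(k+2l+1)-S}$ cancels against one factor of $(q^{R-S};q^R)_\infty$ inside $J$, leaving a manifestly nonnegative series; the second sum is handled identically with $S$ replaced by $-S$ and absorbs into $(q^{S};q^R)_\infty$. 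This elementary pairing is the entire content of Mao's argument for Conjecture~\ref{AMconj} and is what turns your outline into a proof.
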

Clearly \eqref{Euler-ineq} is the special case $(R,S)=(3,1)$ of this conjecture. Guo and Zeng \cite{Guo} provided more evidences to this conjecture, For example, they proved this conjecture for the case $(R,S)=(4,1)$ and a weaker inequality for the case $(R,S)=(2,1)$. Note that for $(R,S)=(4,1)$ and $(2,1)$, the partition functions generated by the infinite products in \eqref{Andrewsconj} are $\mathrm{pod}(n)$ and $\overline{p}(n)$, respectively. Here $\mathrm{pod}(n)$ counts the number of partitions of $n$ with odd parts distinct, and $\overline{p}(n)$ enumerates the number of overpartitions of $n$. Specifically speaking, Guo and Zeng proved the following inequalities in analogy with \eqref{Euler-ineq}: for $n,k\geq 1$,
\begin{align}
&(-1)^{k-1}\sum_{j=0}^{k-1}(-1)^j\Big(\mathrm{pod}(n-j(2j+1))-\mathrm{pod}(n-(j+1)(2j+1)) \Big)\geq 0, \label{pod-ineq} \\
&(-1)^{k}\Big(\overline{p}(n)+2\sum_{j=1}^{k}(-1)^j\overline{p}(n-j^2)\Big)\geq 0 . \label{overpartition-ineq}
\end{align}
They proved the above inequalities by establishing identities analogous to \eqref{AM-id}. Andrews and Merca \cite{Andrews-Merca2018} pointed out that \eqref{AM-id} and analogous identities for $\overline{p}(n)$ and $\mathrm{pod}(n)$ in the work of Guo and Zeng \cite{Guo}  are essentially corollaries of the Rogers-Fine identity.

In 2015, two different proofs of Conjecture \ref{AMconj} were given by Mao \cite{Mao} and Yee \cite{Yee} independently. Mao's proof is algebraic while Yee's proof is combinatorial. Mao \cite{Mao} also proved that Conjecture \ref{AMconj} is true for all $k\ge 1$ and $1\le S<R$.

The inequality \eqref{Euler-ineq} is not an isolated phenomenon. Besides the general result given in Conjecture \ref{AMconj}, Guo and Zeng \cite{Guo} also provided another example. They considered the partition function $t(n)$ defined by
\[\sum\limits_{n=0}^{\infty}t(n)q^n=\frac{1}{(q;q)_{\infty}^3}.\]
Note that $t(n)$ enumerates the number of partition triples (or equivalently, 3-colored partitions) of $n$. From Jacobi's identity
\[(q;q)_{\infty}^3=\sum\limits_{j=0}^{\infty}(-1)^j(2j+1)q^{j(j+1)/2},\]
One easily finds the recurrence formula
\begin{equation}\label{t(n)rec}
\sum\limits_{j=0}^{\infty}(-1)^j(2j+1)t(n-j(j+1)/2)=0,
\end{equation}
where $t(m)=0$ for negative $m$. Guo and Zeng \cite[Conjecture 6.4]{Guo} conjectured that for $n,k\ge 1$, there holds
\begin{align}\label{GZ-conj-1}
(-1)^k\sum\limits_{j=0}^{k}(-1)^j(2j+1)t(n-j(j+1)/2)\ge 0.
\end{align}

For convenience, we introduce a notation. For any Laurent series
\begin{align}
f(q)=\sum_{n\in \mathbb{Z}}a_nq^n \quad \text{and} \quad g(q)=\sum_{n\in \mathbb{Z}}b_nq^n
\end{align}
with real coefficients, we say that $f(q)\succeq g(q)$ when $a_n\geq b_n$ holds for all $n\in \mathbb{Z}$. With this notation, \eqref{GZ-conj-1} can also be stated as
\begin{align}\label{GZ-conj-2}
\frac{(-1)^k}{(q;q)_\infty^3}\sum_{j=0}^k(-1)^j(2j+1)q^{j(j+1)/2} \succeq 0.
\end{align}
This conjectured was confirmed by Mao \cite{Mao} using $q$-series manipulations and then proved by He, Ji and Zang \cite{HJZ} using combinatorial arguments. Recently, Wang and Yee \cite{WangYee} proved that for $1\leq S\leq R/2$ and $m\geq 1$,
\begin{align}\label{Wang-Yee-id}
&\frac{1}{(q^R,q^S,q^{R-S};q^R)_\infty} \sum_{n=0}^{m-1}(-1)^nq^{\binom{n+1}{2}R-nS}(1-q^{(2n+1)S}) \nonumber \\
=&1+(-1)^{m-1}q^{\binom{m}{2}R}\sum_{n=m}^\infty \sum_{\begin{smallmatrix}
i+j+h+k=n \\ i,j,h,k\geq 0
\end{smallmatrix}} \frac{q^{(mj+hk)R+(h-k)S+nR}}{(q^R;q^R)_i(q^R;q^R)_j(q^R;q^R)_h(q^R;q^R)_k}\left[\begin{matrix}
n-1 \\ m-1
\end{matrix}\right]_{q^{R}}.
\end{align}
This gives a reminiscent of \eqref{AM-id} and reproves Conjecture \ref{AMconj}. They also gave explicit series form of the truncated series in \eqref{GZ-conj-2}, which reconfirms \eqref{GZ-conj-2}. We remark here that there are other works related to this topic. For example, Chan, Ho and Mao \cite{ChanHoMao} investigated the truncated series arising from the quintuple product identity.

Inspired by their works, there are two goals of this paper. First, we want to give a new proof for \eqref{Euler-ineq} by giving a new combinatorial interpretation to its left side. To describe our result, we need few more notations. Let $P(m)$ be the set of all partitions of $m$ and agree that $P(m)=\emptyset$ for $m\leq 0$. For a finite set $A$, let $|A|$ denote the number of elements in $A$. Recall Dyson's rank of a partition $\lambda$, denoted as $\mathrm{rank}(\lambda)$, is defined as the largest part minus the number of parts in $\lambda$. For any  nonnegative integer $n$ and integer $j$, we denote
\begin{align}
A_{j}^{(1)}(n):=\{\lambda \in P(n-j(3j+1)/2): \mathrm{rank}(\lambda)\leq 3j\}, \label{Aj1-defn}\\
A_{j}^{(2)}(n):=\{\lambda \in P(n-j(3j+1)/2): \mathrm{rank}(\lambda)> 3j\}. \label{Aj2-defn}
\end{align}
We proved the following theorem, which reproves \eqref{Euler-ineq}.
\begin{theorem}\label{thm-Euler-ineq}
For $n>0$, $k\geq 1$,
\begin{align}\label{Ak}
&(-1)^{k-1}\sum\limits_{j=0}^{k-1}(-1)^j\Big(p(n-j(3j+1)/2)-p(n-j(3j+5)/2-1)\Big) \nonumber \\
=& |A_{k-1}^{(2)}(n)|-|A_{-k}^{(1)}(n)|.
\end{align}
Furthermore, we have $|A_{k}^{(1)}(n)|=|A_{k-1}^{(2)}(n)|\geq |A_{-k}^{(1)}(n)|$.
\end{theorem}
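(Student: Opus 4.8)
The plan is to collapse the left side of \eqref{Ak} to a single alternating sum of partition numbers over generalized pentagonal indices, reduce everything to one bijection for Dyson's rank, and finish with an explicit injection.

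Write $g(j)=j(3j+1)/2$ for $j\in\mathbb Z$. Since $j(3j+1)/2=g(j)$ and $j(3j+5)/2+1=g(-j-1)$, replacing $j$ by $-j-1$ in the subtracted terms shows that the left side of \eqref{Ak} equals $(-1)^{k-1}\sum_{j=-k}^{k-1}(-1)^{j}p(n-g(j))$. Using $p(m)=|P(m)|$ — with the empty partition, of rank $0$, understood to be the single element of $P(0)$, so that $|P(0)|=p(0)=1$ (this is what the ``Furthermore'' needs anyway) — and splitting $P(n-g(j))=A_j^{(1)}(n)\sqcup A_j^{(2)}(n)$ by whether the rank is $\le 3j$ or $>3j$, the left side of \eqref{Ak} becomes $(-1)^{k-1}\sum_{j=-k}^{k-1}(-1)^{j}\bigl(|A_j^{(1)}(n)|+|A_j^{(2)}(n)|\bigr)$.

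The heart of the matter is to show that $|A_j^{(2)}(n)|=|A_{j+1}^{(1)}(n)|$ for all $j\in\mathbb Z$ and $n\ge 1$. As $g(j+1)-g(j)=3j+2$, this amounts to the claim that for every $N\ge 0$ and every integer $m$ (here $m=3j+1$, so $m\ne 0$), the partitions of $N$ of rank $\ge m$ are equinumerous with the partitions of $N-m-1$ of rank $\le m+2$. I would prove this by the map that deletes the first row of $\lambda=(\lambda_1,\dots,\lambda_\ell)$ (of rank $\ge m$) and then adjoins a new first column of length $\lambda_1-m-1$, i.e.\ $\lambda\mapsto\mu:=(\lambda_2+1,\dots,\lambda_\ell+1,1,\dots,1)$ with $\lambda_1-m-\ell$ trailing $1$'s. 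The hypothesis $\lambda_1-\ell\ge m$ is exactly what makes the number of trailing $1$'s nonnegative, so $\mu\vdash N-m-1$; moreover $\ell(\mu)=\lambda_1-m-1$ and $\mu_1=\lambda_2+1$ (read $\lambda_2=0$ if $\ell=1$), whence $\mathrm{rank}(\mu)=\lambda_2-\lambda_1+m+2\le m+2$. The inverse deletes the first column of $\mu$ and adjoins a new first row of length $\ell(\mu)+m+1$, which is a partition precisely because $\mathrm{rank}(\mu)\le m+2$; the two maps are visibly mutually inverse. Specializing $j=k-1$ already gives the equality $|A_k^{(1)}(n)|=|A_{k-1}^{(2)}(n)|$ of the ``Furthermore''.

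Now substitute $|A_j^{(2)}(n)|=|A_{j+1}^{(1)}(n)|$ into the sum at the end of the second paragraph: it telescopes to $(-1)^{k-1}(-1)^{k}\bigl(|A_{-k}^{(1)}(n)|-|A_k^{(1)}(n)|\bigr)=|A_k^{(1)}(n)|-|A_{-k}^{(1)}(n)|$, which combined with the equality just proved is \eqref{Ak}. Finally, for $|A_{k-1}^{(2)}(n)|\ge|A_{-k}^{(1)}(n)|$ I would give an injection $A_{-k}^{(1)}(n)\hookrightarrow A_{k-1}^{(2)}(n)$: conjugate $\lambda\in A_{-k}^{(1)}(n)$ so that rank $\le -3k$ becomes rank $\ge 3k$ and hence the largest part is at least (number of parts) $+\,3k$, then add $2k-1$ to the largest part; since $g(-k)-g(k-1)=2k-1$ the result is a partition of $n-g(k-1)$, its rank is $\ge 5k-1>3(k-1)$, and the map is injective because the largest part of the image determines the preimage. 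I expect the only real work to lie in the third paragraph: hitting on the surgery that makes removing exactly $m+1$ cells interchange ``rank $\ge m$'' and ``rank $\le m+2$'' reversibly, verifying that the two rank bounds really do pin each other down in both directions, and checking the degenerate cases (the empty partition, values of $n$ where some $n-g(j)=0$, and small $N$ where some $A_j^{(i)}(n)$ is empty) — the one genuinely bad modulus, $m=0$, never occurring because $m\equiv1\pmod3$.
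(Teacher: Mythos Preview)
Your proposal is correct and follows essentially the same route as the paper: the paper also rewrites the left side as $(-1)^{k-1}\sum_{j=-k}^{k-1}(-1)^{j}p(n-g(j))$, splits $P(n-g(j))=A_j^{(1)}(n)\sqcup A_j^{(2)}(n)$, uses the identity $|A_j^{(1)}(n)|=|A_{j-1}^{(2)}(n)|$ to telescope, and finishes with the injection ``conjugate, then add $2k-1$ to the largest part''. The row/column surgery you describe in your third paragraph is precisely the Bressoud--Zeilberger involution $\phi$ (which the paper quotes rather than rediscovers), so the two arguments coincide step for step.
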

Recall that Kolitsch and Burnette \cite{Kolitsch} interpreted the partial sum on the left side of \eqref{Euler-ineq} using partition pairs. Compared with their interpretation, the advantage of \eqref{Ak} is that we do not need to use partition pairs. Instead, we only need to consider the partitions enumerated by $p(n-j(3j+1)/2)$, which is more natural and closer to the sieving process that Andrews and Merca \cite{Andrews-Merca2012} asked for.

The second goal of this paper is to provide more instances of nonnegative series arising from truncated sums.  Our question of motivation is: if we start with a new recurrence relation for the partition function $p(n)$ and study the truncated sums, can we find similar nonnegativity result?

A natural candidate of the recurrence relation can be found in the following way. Taking logarithmic differentiation with respect to $z$ on both sides of \eqref{JTP}, we obtain
\begin{align}\label{JTP-diff}
\frac{\sum_{j=-\infty}^\infty (-1)^jjz^jq^{j(j-1)/2}}{(z,q/z,q;q)_\infty}=\sum_{n=0}^\infty \left( \frac{q^{n+1}/z}{1-q^{n+1}/z}-\frac{zq^n}{1-zq^n} \right).
\end{align}
Replacing $q$ by $q^R$ and setting $z=q^{R-S}$, we obtain
\begin{align}\label{diff-rec}
\frac{\sum_{j=-\infty}^\infty (-1)^jjq^{Rj(j+1)/2-Sj}}{(q^S,q^{R-S},q^R;q^R)_\infty}=\sum_{n=0}^\infty \left(\frac{q^{nR+S}}{1-q^{nR+S}}-\frac{q^{nR+R-S}}{1-q^{nR+R-S}}  \right).
\end{align}
If we set $(R,S)=(3,1)$, then we obtain the following recurrence relation:
\begin{equation}\label{pn-new-rec}
\sum\limits_{j(3j+1)/2\le n, j\in \mathbb{Z}}(-1)^{j}jp\Big(n-j(3j+1)/2 \Big)=d_{1,3}(n)-d_{2,3}(n),
\end{equation}
where $d_{r,3}(n)$ is the number of divisors of $n$ which are congruent to $r$ modulo $3$. Considering partial sums in \eqref{pn-new-rec}, by computation, it appears that
\begin{align*}
&p(n-1)\geq d_{1,3}(n)-d_{2,3}(n), \\
&p(n-1)-p(n-2)-2p(n-5)\leq d_{1,3}(n)-d_{2,3}(n), \\
&p(n-1)-p(n-2)-2p(n-5)+2p(n-7)+3p(n-12)\geq d_{1,3}(n)-d_{2,3}(n), \\
&\cdots
\end{align*}
This observation motivates us to obtain some inequalities similar to \eqref{Euler-ineq}, \eqref{pod-ineq}, \eqref{overpartition-ineq} and \eqref{GZ-conj-1}. In fact, if we study truncated sums in the series in \eqref{diff-rec}, we can obtain the following general result in analogy with Conjecture \ref{AMconj}.
\begin{theorem}\label{thm-main}
For positive integers $k,R,S$ with $k\ge 1$ and $1\le S<R$, the coefficient of $q^n$ with $n\ge 1$ in
\[(-1)^{k-1}\Bigg(\frac{\sum\limits_{j=-k}^{k-1}(-1)^jjq^{Rj(j+1)/2-Sj}}{(q^S,q^{R-S},q^R;q^R)_{\infty}}-\sum\limits_{n=0}^{\infty}\Big(\frac{q^{nR+S}}{1-q^{nR+S}}-\frac{q^{nR+R-S}}{1-q^{nR+R-S}} \Big) \Bigg) \]
is nonnegative.
\end{theorem}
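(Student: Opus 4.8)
The plan is to follow the mechanism behind \eqref{AM-id} and \eqref{Wang-Yee-id}: rewrite the truncated expression as the \emph{tail} of a bilateral series divided by $(q^S,q^{R-S},q^R;q^R)_\infty$ and then find an explicit series for that tail with visibly nonnegative coefficients. For the first step, \eqref{diff-rec} says the subtracted Lambert-type sum equals $(q^S,q^{R-S},q^R;q^R)_\infty^{-1}\sum_{j\in\mathbb{Z}}(-1)^jj\,q^{Rj(j+1)/2-Sj}$, so the quantity in Theorem \ref{thm-main} is
\[
\frac{(-1)^{k}}{(q^S,q^{R-S},q^R;q^R)_\infty}\left(\sum_{j\ge k}+\sum_{j\le -k-1}\right)(-1)^jj\,q^{Rj(j+1)/2-Sj}.
\]
Substituting $j=-m-1$ in the second sum collapses the bracket to
\[
T_k:=\sum_{m\ge k}(-1)^m\left(m\,q^{Rm(m+1)/2-Sm}+(m+1)\,q^{Rm(m+1)/2+S(m+1)}\right),
\]
all of whose exponents are positive because $1\le S<R$. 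Thus Theorem \ref{thm-main} is the statement $(-1)^kT_k/(q^S,q^{R-S},q^R;q^R)_\infty\succeq 0$ in positive degrees, the exact weighted analogue of the now-established Conjecture \ref{AMconj}. As a consistency check, when $R=2S$ the identity $Rm(m+1)/2+S(m+1)=R(m+1)(m+2)/2-S(m+1)$ makes the two families of monomials in $T_k$ cancel in consecutive pairs, leaving $T_k=(-1)^kk\,q^{Sk^2}$, so that case is trivial.

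For the explicit series I would use that $\sum_j(-1)^jj z^jq^{j(j-1)/2}=z\frac{d}{dz}\sum_j(-z)^jq^{j(j-1)/2}$. Start from the truncated Jacobi triple product identity underlying \eqref{Wang-Yee-id}, written with a free parameter $z$ in the bilateral form
\[
\frac{\sum_{\ell=-k}^{k-1}(-z)^\ell q^{\ell(\ell-1)/2}}{(z,q/z,q;q)_\infty}=1+(-1)^{k-1}E_k(z,q),
\]
where $E_k$ is the explicit Rogers--Fine (quadruple) sum of \eqref{Wang-Yee-id}, so that $q\mapsto q^R$, $z\mapsto q^{R-S}$ recovers \eqref{Wang-Yee-id}. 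Applying $z\frac{d}{dz}$ to both sides and using \eqref{JTP-diff} to recognise $z\frac{d}{dz}\log\big((z;q)_\infty(q/z;q)_\infty\big)$ as the right-hand side $G(z)$ of \eqref{JTP-diff}, one gets
\[
(-1)^{k-1}\left(\frac{\sum_{\ell=-k}^{k-1}(-1)^\ell\ell z^\ell q^{\ell(\ell-1)/2}}{(z,q/z,q;q)_\infty}-G(z)\right)=E_k(z,q)\,G(z)+z\frac{d}{dz}E_k(z,q).
\]
Specialising $q\mapsto q^R$, $z\mapsto q^{R-S}$ turns the left-hand side into precisely the expression of Theorem \ref{thm-main}.

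The step I expect to be the real obstacle is extracting nonnegativity from the right-hand side, which --- in contrast to the non-weighted case --- is not a sum of manifestly nonnegative pieces. After the specialisation $G$ becomes the Lambert series $\sum_{n\ge0}\big(\frac{q^{nR+S}}{1-q^{nR+S}}-\frac{q^{nR+R-S}}{1-q^{nR+R-S}}\big)$, which counts with sign the divisors of $n$ in two residue classes modulo $R$ and genuinely has negative coefficients for some admissible $(R,S)$ (for instance the coefficient of $q^5$ is $-1$ when $(R,S)=(8,3)$); and $E_k(z,q)$ is a Laurent series in $z$ with unbounded negative powers, so $z\frac{d}{dz}E_k$ carries weights that are not sign-definite. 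The heart of the proof must therefore be to reorganise $E_k(z,q)G(z)+z\frac{d}{dz}E_k(z,q)$ --- combining the quadruple-sum structure of $E_k$ with the divisor-sum structure of $G$, and absorbing the negative contributions against one another by a telescoping or bijective argument --- into a single sum built only from reciprocals $1/(q^R;q^R)_\bullet$, $q$-binomial coefficients, nonnegative monomial weights and nonnegative powers of $q$. A plausibly cleaner route is to bypass $E_k$ and run a direct Rogers--Fine computation, in the spirit of Andrews--Merca \cite{Andrews-Merca2018} and Wang--Yee \cite{WangYee}, on the logarithmic $z$-derivative of the Rogers--Fine expansion of $(z,q/z,q;q)_\infty^{-1}$, which should output such a nonnegative series for $(-1)^kT_k/(q^S,q^{R-S},q^R;q^R)_\infty$ directly; once that series is in hand, Theorem \ref{thm-main} follows at once.
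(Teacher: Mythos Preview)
Your reduction to the tail $T_k$ is correct and matches the paper's first step exactly. The gap is everything after that: you yourself identify that $E_k(z,q)\,G(z)+z\frac{d}{dz}E_k(z,q)$ has no reason to have nonnegative coefficients after specialisation, and you do not actually carry out the promised ``reorganisation'' or the alternative Rogers--Fine computation. As it stands the proposal is a plan with the decisive step missing; the differentiation route may well be workable, but nothing you wrote shows how the negative contributions from $G$ and from the negative $z$-powers of $z\partial_z E_k$ are absorbed, so Theorem~\ref{thm-main} is not proved.

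The paper avoids this obstacle by never differentiating a truncated identity in $z$. Instead, after arriving at the same tail
\[
(-1)^k q^{\frac{R}{2}k^2+(\frac{R}{2}-S)k}\sum_{j\ge 0}\Big((-1)^j(j+k)q^{\frac{R}{2}j^2+Rjk+(\frac{R}{2}-S)j}+(-1)^j(j+k+1)q^{\frac{R}{2}j^2+Rjk+(\frac{R}{2}+S)j+S(2k+1)}\Big),
\]
it splits the weights as $j+k=(k-1)+(j+1)$ and $j+k+1=k+(j+1)$, producing four sums $I_1,\dots,I_4$: two with constant weight and two with weight $j+1$. The constant-weight sums are exactly the unweighted tails treated by Mao's identities \cite[Eqs.\ (3.3)--(3.4)]{Mao}, already known to be $\succeq 0$ after dividing by $(q^S,q^{R-S},q^R;q^R)_\infty$. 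The $(j+1)$-weighted sums are handled by the single identity \cite[p.~22]{Mao}
\[
\sum_{j\ge 0}(-1)^j(j+1)a^jq^{j(j+1)/2}=(a,q;q)_\infty\sum_{n,m\ge 0}\frac{q^{2n+m}}{(a,q;q)_n(1-aq^{n+m})},
\]
which after $q\mapsto q^R$, $a=q^{kR\mp S}$ is manifestly $\succeq 0$ upon dividing by the product. The key idea you are missing is this weight-splitting, which reduces the weighted problem to Mao's already-established nonnegative forms rather than trying to push nonnegativity through a $z$-derivative.
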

As a special case, setting $(R,S)=(3,1)$, we confirm our aforementioned observation.
\begin{corollary}\label{cor-pn}
Let $k\ge 1$ be a positive integer.
\begin{enumerate}
\item[$(1)$] If $k$ is odd, we have
\[\sum\limits_{j=-k}^{k-1}(-1)^jjp(n-j(3j+1)/2)\ge d_{1,3}(n)-d_{2,3}(n).\]
\item[$(2)$] If $k$ is even, we have
\[\sum\limits_{j=-k}^{k-1}(-1)^jjp(n-j(3j+1)/2)\le d_{1,3}(n)-d_{2,3}(n).\]
\end{enumerate}
\end{corollary}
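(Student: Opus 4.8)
Corollary \ref{cor-pn} is the specialization $(R,S)=(3,1)$ of Theorem \ref{thm-main} (note $(q,q^2,q^3;q^3)_\infty=(q;q)_\infty$, and the divisor series there becomes $\sum_m(d_{1,3}(m)-d_{2,3}(m))q^m$), so it suffices to prove Theorem \ref{thm-main}. My plan is to reformulate it via the differentiated Jacobi triple product, reduce to a one-sided tail, and then produce an explicit nonnegative-series representation by differentiating the truncated identity underlying \eqref{Wang-Yee-id}. Set $g(j):=q^{Rj(j+1)/2-Sj}$, $P:=(q^S,q^{R-S},q^R;q^R)_\infty$, and $D(q):=\sum_{n\ge0}\bigl(\frac{q^{nR+S}}{1-q^{nR+S}}-\frac{q^{nR+R-S}}{1-q^{nR+R-S}}\bigr)$. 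By \eqref{diff-rec} one has $\frac1P\sum_{j\in\mathbb Z}(-1)^jjg(j)=D(q)$, so the quantity in Theorem \ref{thm-main} equals
\[\frac{(-1)^{k-1}}{P}\Bigl(\sum_{j=-k}^{k-1}(-1)^jjg(j)-\sum_{j\in\mathbb Z}(-1)^jjg(j)\Bigr)=\frac{(-1)^{k}}{P}\sum_{\substack{j\in\mathbb Z\\ j\ge k\ \mathrm{or}\ j\le-k-1}}(-1)^jjg(j).\]
Pairing the summand at $j\ge k$ with the one at $-j-1\le-k-1$ and using $g(-j-1)=g(j)q^{(2j+1)S}$ and $(-1)^{-j-1}=-(-1)^j$, the bilateral tail collapses, so it suffices to show
\[\frac{(-1)^k}{P}\sum_{j\ge k}(-1)^jq^{Rj(j+1)/2-Sj}\bigl(j+(j+1)q^{(2j+1)S}\bigr)\succeq0.\]

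To replace this alternating tail by a single manifestly nonnegative $q$-series I would differentiate the truncated Jacobi triple product identity behind \eqref{Wang-Yee-id}. Fix $R,S$ and introduce an auxiliary variable $z$: put $C(z):=\sum_{j\in\mathbb Z}(-z)^jq^{Rj(j-1)/2}=(z;q^R)_\infty(q^R/z;q^R)_\infty(q^R;q^R)_\infty$ and $B_k(z):=\sum_{j=-k}^{k-1}(-z)^jq^{Rj(j-1)/2}$, so that at $z=q^{R-S}$ one has $C(z)=P$, $B_k(z)=\sum_{j=0}^{k-1}(-1)^jq^{Rj(j+1)/2-Sj}(1-q^{(2j+1)S})$, and $z\frac{d}{dz}B_k(z)=\sum_{j=-k}^{k-1}(-1)^jjg(j)$. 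A $z$-parametrized form of \eqref{Wang-Yee-id} (obtained by retracing its proof before specializing $z$) reads $B_k(z)/C(z)=1+(-1)^{k-1}\Phi_k(z)$, where $\Phi_k(z)$ is built from reciprocal $q^R$-Pochhammer symbols and $q^R$-binomials, depends on $z$ only through integer powers of $z$ and $q^R/z$, and specializes at $z=q^{R-S}$ to the nonnegative multi-sum on the right of \eqref{Wang-Yee-id}. Applying $z\frac{d}{dz}$ to $B_k(z)=C(z)+(-1)^{k-1}C(z)\Phi_k(z)$ and dividing by $C(z)$ gives
\[\frac{z\frac{d}{dz}B_k(z)}{C(z)}=\frac{z\frac{d}{dz}C(z)}{C(z)}+(-1)^{k-1}\Bigl(\frac{z\frac{d}{dz}C(z)}{C(z)}\,\Phi_k(z)+z\frac{d}{dz}\Phi_k(z)\Bigr);\]
by \eqref{JTP-diff} (with $q\mapsto q^R$) the logarithmic derivative $z\frac{d}{dz}C(z)/C(z)$ specializes at $z=q^{R-S}$ to $D(q)$, so the quantity in Theorem \ref{thm-main} equals $D(q)\,\Phi_k(q^{R-S})+z\frac{d}{dz}\Phi_k(z)\big|_{z=q^{R-S}}$. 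It remains to show this is $\succeq0$: rewrite it as $\frac{1}{C(z)}z\frac{d}{dz}\bigl[(-1)^{k-1}(B_k(z)-C(z))\bigr]\big|_{z=q^{R-S}}$, re-expand via the explicit $\Phi_k$, and recombine—$z\frac{d}{dz}$ multiplies each summand by its net power of $z$, and after suitable shifts of the $q^R$-Pochhammer factors the negative contributions should cancel, leaving an explicitly nonnegative series, whence Theorem \ref{thm-main} and Corollary \ref{cor-pn}.

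The hard part will be this final recombination. Unlike \eqref{AM-id} and \eqref{Wang-Yee-id}, the differentiated identity is not positive term by term: $z\frac{d}{dz}$ creates integer weights of both signs, and $D(q)$ itself has coefficients of both signs once $S\ne R-S$. Establishing that every coefficient of the resulting series is nonnegative—i.e. finding the correct regrouping of terms, via a resummation identity or via a combinatorial bijection realizing the cancellations—is where the essential difficulty lies. A viable alternative that avoids \eqref{Wang-Yee-id} entirely is to re-run the Rogers--Fine and Cauchy-type manipulations used to prove it (and \eqref{AM-id}) directly on the differentiated theta function $\sum_{j\in\mathbb Z}(-1)^jjz^jq^{Rj(j-1)/2}$.
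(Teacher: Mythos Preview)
Your reduction is correct and matches the paper exactly: subtract the full identity \eqref{diff-rec}, pair $j$ with $-j-1$, and arrive at the one-sided tail
\[
\frac{(-1)^k}{P}\sum_{j\ge k}(-1)^jq^{Rj(j+1)/2-Sj}\bigl(j+(j+1)q^{(2j+1)S}\bigr)\succeq 0.
\]
But from this point on you only sketch a plan and explicitly leave the central step unresolved. Differentiating a $z$-deformation of \eqref{Wang-Yee-id} does not give termwise nonnegativity: as you yourself say, $z\tfrac{d}{dz}$ introduces integer weights of both signs and $D(q)$ has coefficients of both signs, so the expression $D(q)\,\Phi_k(q^{R-S})+z\partial_z\Phi_k\big|_{z=q^{R-S}}$ is not manifestly $\succeq 0$, and you provide no mechanism for the cancellations (``the negative contributions should cancel'' is not a proof). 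This is a genuine gap---you have reduced Theorem~\ref{thm-main} to precisely its hard step and then stopped.

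The paper avoids this difficulty altogether by a different decomposition. After shifting $j\mapsto j+k$ in the tail, write the linear weights as $j+k=(k-1)+(j+1)$ and $j+k+1=k+(j+1)$, so that
\[
(-1)^{k-1}P\,D_k(q)=q^{\frac{R}{2}k^2+(\frac{R}{2}-S)k}\bigl((k-1)I_1+kI_2+I_3+I_4\bigr),
\]
where $I_1,I_2$ are ordinary (weight-free) partial theta tails and $I_3,I_4$ carry the factor $(j+1)$. The weight-free pieces are handled by Mao's identities $\sum_{j\ge 1}(-1)^{j+1}q^{Rj(j-1)/2+(Rk\mp S)j}=1-f_{R,\pm S,k}$ (then pairing consecutive terms to exhibit nonnegativity of $(1-f_{R,\pm S,k})/P$), and the $(j+1)$-weighted pieces by Mao's identity
\[
\sum_{j\ge 0}(-1)^j(j+1)a^jq^{j(j+1)/2}=(a,q;q)_\infty\sum_{n,m\ge 0}\frac{q^{2n+m}}{(a,q;q)_n(1-aq^{n+m})}
\]
with $q\mapsto q^R$ and $a=q^{kR\mp S}$. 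After dividing by $P$, each of the four pieces is a manifestly nonnegative $q$-series---no cancellation needs to be organized. This is in fact the ``Rogers--Fine/Cauchy-type'' alternative you gesture at in your last sentence; the missing idea is not to differentiate \eqref{Wang-Yee-id} at all, but to peel off the linear weight as a constant plus $(j+1)$ and invoke the two Mao identities separately.
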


The paper is organized as follows. In Section 2, we first sketch the combinatorial proof of the recurrence relation \eqref{Euler-rec} given by Bressoud and  Zeilberger \cite{Bressoud}, and then we prove Theorem \ref{thm-Euler-ineq}. In Section 3, we follow the techniques used by Mao \cite{Mao} to prove Theorem \ref{thm-main} via $q$-series manipulations. We will pose two open problems in the end of this paper.

\section{Proof of Theorem \ref{thm-Euler-ineq}}
Bressoud and Zeilberger \cite{Bressoud} discovered a beautiful bijective proof of Euler's partition recurrence \eqref{Euler-rec}. We sketch their proof here and use it to give a short explanation to \eqref{Euler-ineq}.

Let $b(j)=j(3j+1)/2$. Then \eqref{Euler-rec} is equivalent to
\begin{align}\label{equivalent}
\sum\limits_{\text{\rm{$j$ even}}}p(n-b(j))=\sum\limits_{\text{\rm{$j$ odd}}}p(n-b(j)).
\end{align}
 To prove \eqref{equivalent}, it suffices to establish a bijection between the two sets of partitions enumerated on both sides. Following Bressoud and Zeilberger, for any partition $\lambda=(\lambda_1,\cdots,\lambda_{t})\in P(n-b(j))$ with $\lambda_1\geq \lambda_2\geq \cdots \geq \lambda_t$, we define
\begin{align*}
\phi(\lambda):=\left\{\begin{array}{ll}
(t+3j-1,\lambda_{1}-1,\lambda_{2}-1,\cdots, \lambda_{t}-1) & \text{\rm{Case 1: if $t+3j\ge \lambda_{1}$}} \\
(\lambda_{2}+1,\lambda_{3}+1,\cdots , \lambda_{t}+1,1,\cdots,1) & \text{\rm{Case 2: if $t+3j<\lambda_{1}$}}
\end{array}\right.
\end{align*}
where there are $\lambda_{1}-(t+3j)-1$ copies of 1s in Case 2. Note that in Case 1, $\phi$ maps the elements in $P(n-b(j))$ to $P(n-b(j-1))$. In Case 2, $\phi$ maps the elements in $P(n-b(j))$ to $P(n-b(j+1))$.
It is not difficult to show that $\phi^2=1$ and thus $\phi$ is a bijection between $\cup_{\text{\rm{$j$ even}}}P(n-b(j))$ and $\cup_{\text{\rm{$j$ odd}}}P(n-b(j))$. This proves \eqref{equivalent} and hence Euler's pentagonal number theorem.

We are now ready to give a proof of \eqref{Euler-ineq} in a combinatorial way.
\begin{proof}[Proof of Theorem \ref{thm-Euler-ineq}]
Recall the definitions of $A_{j}^{(1)}(n)$ and $A_j^{(2)}(n)$ in \eqref{Aj1-defn} and \eqref{Aj2-defn}. It follows that $P(n-b(j))=A_{j}^{(1)}(n)\cup A_{j}^{(2)}(n)$. From the definition of $\phi$, we see that $\phi$ maps a partition in  $A_{j}^{(1)}(n)$ to a partition in $A_{j-1}^{(2)}(n)$, and $\phi$ maps a partition in $A_{j-1}^{(2)}(n)$ to a partition in $A_{j}^{(1)}(n)$. Since $\phi$ is a bijection, we deduce that
\begin{align}
|A_{j}^{(1)}(n)|=|A_{j-1}^{(2)}(n)|.
\end{align}
 We have
\begin{align}
&(-1)^{k-1}\sum\limits_{j=0}^{k-1}(-1)^j\Big(p(n-j(3j+1)/2)-p(n-j(3j+5)/2-1)\Big) \nonumber \\
=& (-1)^{k-1}\sum\limits_{j=-k}^{k-1}(-1)^{j}p(n-j(3j+1)/2) \nonumber \\
=& (-1)^{k-1}\sum\limits_{j=-k}^{k-1}(-1)^j\big(|A_{j}^{(1)}(n)|+|A_{j}^{(2)}(n)| \big) \nonumber \\
=& -(|A_{-k}^{(1)}(n)|+|A_{-k}^{(2)}(n)|)+(|A_{-k+1}^{(1)}(n)|+|A_{-k+1}^{(2)}(n)|)-(|A_{-k+2}^{(1)}(n)|\nonumber \\
&\quad +|A_{-k+2}^{(2)}(n)|)+\cdots -(|A_{k-2}^{(1)}(n)|+|A_{k-2}^{(2)}(n)|) +(|A_{k-1}^{(1)}(n)|+|A_{k-1}^{(2)}(n)|) \nonumber \\
=& |A_{k-1}^{(2)}(n)|-|A_{-k}^{(1)}(n)|.\label{Ak-proof}
\end{align}
This proves \eqref{Ak}. Now it suffices to prove that
\begin{align}\label{Ak-ineq}
|A_{k-1}^{(2)}(n)|\geq |A_{-k}^{(1)}(n)|.
\end{align}
For any partition $\lambda=(\lambda_1,\lambda_2,\cdots,\lambda_t)$, we denote by $\overline{\lambda}=(\lambda_1',\lambda_2',\cdots,\lambda_s')$ its conjugate partition. That is, $\lambda_i'$ is the number of parts in $\lambda$ that are $\geq i$. See \cite[pp. 7-8]{Andrews-book} for more discussion of conjugate partitions. Now for each partition $\lambda=(\lambda_1,\lambda_2,\cdots,\lambda_t)\in A_{-k}^{(1)}(n)$, we have
\begin{align}
\lambda_1+\lambda_2+\cdots+\lambda_t=n-\frac{k(3k-1)}{2}, \quad t-3k\geq \lambda_1.
\end{align}
Consider its conjugation $\overline{\lambda}=(\lambda_1',\lambda_2',\cdots,\lambda_s')$. We have $\lambda_1'\geq \lambda_2'\geq \cdots\ \geq \lambda_s'$, $\lambda_1'=t$, $s=\lambda_1$ and hence $\lambda_1'-s\geq 3k$. We define $$\psi(\lambda):=(\lambda_1'+2k-1,\lambda_2',\cdots,\lambda_s').$$
Clearly $\psi$ gives an injection from $A_{-k}^{(1)}(n)$ to $A_{k-1}^{(2)}(n)$.  Hence \eqref{Ak-ineq} holds. This completes the proof of Theorem \ref{thm-Euler-ineq}.
\end{proof}
\begin{rem}
We shall give an example illustrating the injection $\psi$. We choose $n=15$, $k=2$. We have
\begin{align*}
A_{-2}^{(1)}(15)=&\{(2,2,1,1,1,1,1,1),(2,1,1,1,1,1,1,1,1),(1,1,1,1,1,1,1,1,1,1)\}, \\
A_{1}^{(2)}(15)=&\{(13),(12,1),(11,2),(11,1,1),(10,3),(10,2,1),(10,1,1,1),(9,4),(9,3,1),\nonumber \\
&(9,2,2),(9,2,1,1),(9,1,1,1,),(8,5),(8,4,1),(8,3,2),(8,3,1,1),(8,2,2,1),\nonumber \\
&(7,6),(7,5,1),(7,4,2),(7,3,3)\}.
\end{align*}
The map $\psi$ works as:
\begin{align*}
\psi: (2,2,1,1,1,1,1,1) \quad (\mapsto \text{conjugation} ~~(8,2)) \quad \longmapsto (11,2), \\
\psi: (2,1,1,1,1,1,1,1,1) \quad (\mapsto \text{conjugation}~~ (9,1)) \quad \longmapsto (12,1), \\
\psi:(1,1,1,1,1,1,1,1,1,1) \quad (\mapsto \text{conjugation}~~ (10))  \quad \longmapsto (13).
\end{align*}
\end{rem}

\section{Proof of Theorem \ref{thm-main}}
This section is devoted to proving Theorem \ref{thm-main}. We will follow the method and use several results in the work of Mao \cite{Mao}.
\begin{proof}[Proof of Theorem \ref{thm-main}]
We denote
\begin{align}
D_k(q):=\frac{\sum\limits_{j=-k}^{k-1}(-1)^jjq^{Rj(j+1)/2-Sj}}{(q^S,q^{R-S},q^R;q^R)_{\infty}}-\sum\limits_{n=0}^{\infty}\Big(\frac{q^{nR+S}}{1-q^{nR+S}}-\frac{q^{nR+R-S}}{1-q^{nR+R-S}} \Big).
\end{align}
We have
\begin{align}
&-(q^S,q^{R-S},q^R;q^R)_\infty D_k(q)\nonumber \\
&= \left(\sum_{j=-\infty}^{-k-1} +\sum_{j=k}^\infty \right) (-1)^jjq^{\frac{R}{2}j^2+(\frac{R}{2}-S)j}  \quad \text{(by \eqref{diff-rec})}\nonumber \\
&=\sum_{j=k}^\infty \left((-1)^jjq^{\frac{R}{2}j^2+(\frac{R}{2}-S)j}+(-1)^{-j-1}(-j-1)q^{\frac{R}{2}(j+1)^2-(\frac{R}{2}-S)(j+1)} \right) \nonumber  \\
&=\sum_{j=0}^\infty \left((-1)^{j+k}(j+k)q^{\frac{R}{2}(j+k)^2+(\frac{R}{2}-S)(j+k)}+(-1)^{j+k}(j+k+1)q^{\frac{R}{2}(j+k)^2+(\frac{R}{2}+S)(j+k)+S} \right) \nonumber \\
&=(-1)^{k}q^{\frac{R}{2}k^2+(\frac{R}{2}-S)k}\sum_{j=0}^\infty \Big((-1)^j(j+k)q^{\frac{R}{2}j^2+Rjk+(\frac{R}{2}-S)j}\nonumber \\
&\quad \quad +(-1)^j(j+k+1)q^{\frac{R}{2}j^2+Rjk+(\frac{R}{2}+S)j+S(2k+1)}  \Big). \label{D-start}
\end{align}
Following \cite[Lemma 3.1]{Mao}, we set
\begin{align}
f_{R,S,k}:=(q^R,q^{Rk-S};q^R)_\infty \sum_{n=0}^\infty \frac{q^{Rn}}{(q^R,q^{Rk-S};q^R)_n}.
\end{align}
It was proved that \cite[Eqs.\ (3.3) and (3.4)]{Mao}
\begin{align}
\sum_{j=1}^\infty (-1)^{j+1}q^{Rj(j-1)/2+Rkj+Sj}&=1-f_{R,-S,k}, \label{Mao-id-1} \\
\sum_{j=1}^\infty (-1)^{j+1}q^{Rj(j-1)/2+Rkj-Sj}&=1-f_{R,S,k}. \label{Mao-id-2}
\end{align}
From \eqref{D-start} we can write
\begin{align}\label{D-split}
&(-1)^{k-1}(q^S,q^{R-S},q^R;q^R)_\infty D_k(q) \nonumber \\
=&q^{\frac{R}{2}k^2+(\frac{R}{2}-S)k}\Big((k-1)I_1(q)+kI_2(q)+I_3(q)+I_4(q)\Big),
\end{align}
where
\begin{align}
I_1(q)&:=\sum_{j=0}^\infty (-1)^jq^{\frac{R}{2}(j^2+j)+(kR-S)j}, \label{I1-def} \\
I_2(q)&:=\sum_{j=0}^\infty (-1)^jq^{\frac{R}{2}j^2+Rjk+(\frac{R}{2}+S)j+S(2k+1)}, \label{I2-def} \\
I_3(q)&:=\sum_{j=0}^\infty (-1)^j (j+1)q^{\frac{R}{2}j^2+Rjk+(\frac{R}{2}-S)j}, \label{I3-def}\\
I_4(q)&:=\sum_{j=0}^\infty (-1)^j(j+1)q^{\frac{R}{2}j^2+Rjk+(\frac{R}{2}+S)j+S(2k+1)}. \label{I4-def}
\end{align}
We shall treat $I_1(q), I_2(q), I_3(q)$ and  $I_4(q)$ one by one.

First, replacing $j$ by $j-1$ in \eqref{I1-def}, we obtain
\begin{align}\label{I1-exp}
I_1(q)=q^{S-kR}\sum_{j=1}^\infty (-1)^{j+1}q^{\frac{R}{2}j(j-1)+(kR-S)j}  =q^{S-kR}\left(1-f_{R,S,k} \right).
\end{align}
Similarly, we have
\begin{align}
I_2(q)&=\sum_{j=1}^\infty (-1)^{j+1}q^{\frac{R}{2}j(j-1)+(kR+S)j+(2S-R)k}=q^{(2S-R)k}(1-f_{R,-S,k}). \label{I2-exp}
\end{align}
From \eqref{Mao-id-2}  we have
\begin{align}
&\frac{1-f_{R,S,k}}{(q^S,q^{R-S},q^R;q^R)_\infty}\nonumber \\
=&\frac{\sum_{j=1}^\infty (-1)^{j+1}q^{Rj(j-1)/2+Rkj-Sj}}{(q^R,q^{R-S},q^S;q^R)_\infty} \quad \text{(split the sum according to $j$ even and odd)} \nonumber \\
=&\frac{1}{(q^R,q^{S};q^R)_\infty}\sum_{j=0}^\infty
q^{Rj(2j+1)+Rk(2j+1)-S(2j+1)}\times \frac{1-q^{R(2j+1)+Rk-S}}{(q^{R-S};q^R)_\infty} \succeq 0. \label{I1-mid}
\end{align}
Similarly, (see also \cite[p.\ 21]{Mao})
\begin{align}
&\frac{1-f_{R,-S,k}}{(q^S,q^{R-S},q^R;q^R)_\infty}\nonumber \\
=&\frac{\sum_{j=1}^\infty (-1)^{j+1}q^{Rj(j-1)/2+Rkj+Sj}}{(q^R,q^{R-S},q^S;q^R)_\infty} \nonumber \\
=&\frac{1}{(q^R,q^{R-S};q^R)_\infty}\sum_{j=0}^\infty
q^{Rj(2j+1)+Rk(2j+1)+S(2j+1)}\times \frac{1-q^{R(2j+1)+Rk+S}}{(q^S;q^R)_\infty} \succeq 0. \label{I2-mid}
\end{align}
From \eqref{I1-exp}--\eqref{I2-mid} we deduce that
\begin{align}
\frac{I_1(q)}{(q^S,q^{R-S},q^R;q^R)_\infty} &\succeq 0, \label{I1-positive} \\
\frac{I_2(q)}{(q^S,q^{R-S},q^R;q^R)_\infty} &\succeq 0. \label{I2-positive}
\end{align}

Now we turn to $I_3(q)$. From \cite[p.\ 22]{Mao} we find
\begin{align}\label{Mao-key}
\sum_{j=0}^\infty (-1)^j(j+1)a^jq^{j(j+1)/2}=(a,q;q)_\infty \sum_{n=0}^\infty \sum_{m=0}^\infty \frac{q^{2n+m}}{(a,q;q)_n(1-aq^{n+m})}.
\end{align}
Replacing $q$ by $q^R$ and setting $a=q^{kR-S}$, we obtain
\begin{align}
I_3(q)&=\sum_{j=0}^\infty (-1)^{j}(j+1)q^{\frac{R}{2}j(j+1)+(kR-S)j}\nonumber \\
&=(q^{kR-S},q^R;q^R)_\infty \sum_{n=0}^\infty \sum_{m=0}^\infty \frac{q^{R(2n+m)}}{(q^{kR-S},q^R;q^R)_n(1-q^{kR-S+R(n+m)})}. \label{I3-exp}
\end{align}
It follows that
\begin{align}
&\frac{I_3(q)}{(q^S,q^{R-S},q^R;q^R)_\infty}\nonumber \\
=&\frac{1}{(q^S;q^R)_\infty (q^{R-S};q^R)_{k-1}}\sum_{n=0}^\infty \sum_{m=0}^\infty \frac{q^{R(2n+m)}}{(q^{kR-S},q^R;q^R)_n(1-q^{kR-S+R(n+m)})} \succeq 0. \label{I3-positive}
\end{align}
Similarly, replacing $q$ by $q^R$ and setting $a=q^{kR+S}$ in \eqref{Mao-key}, we obtain
\begin{align}
I_4(q)&=\sum_{j=0}^\infty (-1)^j(j+1)q^{\frac{R}{2}j(j+1)+(kR+S)j+S(2k+1)} \nonumber \\
&=q^{S(2k+1)}(q^{kR+S},q^R;q^R)_\infty \sum_{n=0}^\infty \sum_{m=0}^\infty \frac{q^{R(2n+m)}}{(q^{kR+S},q^R;q^R)_n(1-q^{kR+S+R(n+m)})}. \label{I4-exp}
\end{align}
It follows that
\begin{align}
&\frac{I_4(q)}{(q^S,q^{R-S},q^R;q^R)_\infty}\nonumber \\
=&\frac{q^{S(2k+1)}}{(q^{R-S};q^R)_\infty (q^{S};q^R)_{k}}\sum_{n=0}^\infty \sum_{m=0}^\infty \frac{q^{R(2n+m)}}{(q^{kR+S},q^R;q^R)_n(1-q^{kR+S+R(n+m)})} \succeq 0. \label{I4-positive}
\end{align}
Combining \eqref{I1-positive}, \eqref{I2-positive}, \eqref{I3-positive} and \eqref{I4-positive}, from \eqref{D-split} we conclude that
\begin{align*}
(-1)^{k-1} D_k(q)\succeq 0.
\end{align*}
This completes the proof.
\end{proof}

We end this paper with three problems for investigation in the future. From \eqref{Andrews-Merca-id} and \eqref{Ak} we conclude that
\begin{align}\label{Mk-Ak}
M_k(n)=|A_{k-1}^{(2)}(n)|-|A_{-k}^{(1)}(n)|.
\end{align}
\noindent \textbf{Problem 1.} Can one prove \eqref{Mk-Ak} directly and perhaps in a combinatorial way?\\
\noindent \textbf{Problem 2.} The following identity can be viewed as a generalization of Euler's pentagonal number theorem:
\begin{align}
    (q^S,q^{R-S},q^R;q^R)_\infty=\sum_{j=-\infty}^\infty (-1)^jq^{Rj(j+1)/2-Sj}.
\end{align}
Can one find a mapping generalizing the involution $\phi$ of Bressoud and Zeilberger to prove this identity combinatorially?
Moreover, can one prove Conjecture \ref{AMconj} by using such mapping? \\
\noindent \textbf{Problem 3.} Is it possible to find an identity in analogy with \eqref{Wang-Yee-id} which proves Theorem \ref{thm-main}?

While there might be different approaches in solving the above problems, we have some expectations for the solutions. We hope that someone can find a solution of Problem 2 by establishing a result similar to Theorem \ref{thm-Euler-ineq}. We also expect that the identity for Problem 3 can provide combinatorial interpretation of the coefficients in the series in Theorem \ref{thm-main}, which can naturally explain the nonnegativity.

\subsection*{Acknowledgements}
This work was supported by the National Natural Science Foundation of China (11801424) and a start-up research grant of the Wuhan University.

\end{document}